\newcommand\addvmargin[1]{
  \node[fit=(current bounding box),inner ysep=#1,inner xsep=0]{};
}
\theoremstyle{plain}
\newtheorem{theorem}{Theorem}[section]
\newtheorem{lemma}{Lemma}[section]
\theoremstyle{definition}
\newtheorem{fact}{Fact}[]
\newtheorem{assumption}{Assumption}[]
\newtheorem{example}{Example}[section]
\theoremstyle{remark}
\newtheorem{remark}{Remark}[section]
\newcommand{\E}{\mathbb{E}}
\renewcommand{\Pr}{\mathbb{P}}
\newcommand{\C}{\mathcal{C}}
\newcommand{\Z}{\mathbb{Z}}
\newcommand{\tr}{\mathrm{tr}}
\newcommand{\X}{\mathbb{X}}
\newcommand{\Tr}{\mathrm{Tr}}
\newcommand{\appeq}{\overset{.}{=}}
\newcommand{\mathemdash}{\text{---}}
\let\tilde\widetilde
\let\hat\widehat
\numberwithin{equation}{section}
\title{The ``visible" Wigner matrix}
\author{Arup Bose}
\author{Soumendu Sundar Mukherjee}
\address{
    Statistics and Mathematics Unit \\
    Indian Statistical Institute \\
    203 B. T. Road, Kolkata 700108
}
\email{bosearu@gmail.com}
\email{ssmukherjee@isical.ac.in}
\keywords{Wigner matrix, visibility of lattice points, method of moments, spectral distribution, independent sets in graphs, the independence polynomial, non-crossing partition, Catalan word, Euler's totient function, semi-circle law, co-primaility patterns of integers, Tracy-Widom law.}
\subjclass[2020]{Primary 60B20}
\date{}
\begin{document}

\begin{abstract} 
We consider the ``visible'' Wigner matrix, a Wigner matrix whose $(i, j)$-th entry is coerced to zero if $i, j$ are co-prime. Using a recent result from elementary number theory on co-primality patterns in integers, we show that the limiting spectral distribution of this matrix exists, and give explicit descriptions of its moments in terms of infinite products over primes $p$ of certain polynomials evaluated at $1/p$. We also consider the complementary ``invisible'' Wigner matrix.
\end{abstract}
\maketitle

\section{Introduction}
Suppose $A_n$ is any $n\times n$ real symmetric matrix with random entries and whose eigenvalues (written in some order) are $\lambda_1, \lambda_2, \ldots, \lambda_n$. The random probability law which puts mass $n^{-1}$ on each eigenvalue $\lambda_i$ is called the \textit{empirical spectral measure} of $A_n$ and the corresponding distribution function $F_{A_n}$ is called the \textit{empirical spectral distribution function} (ESD). The {\it expected empirical spectral distribution function} (EESD) is defined as
\[
    \E[F_{A_n}(x)]=\frac{1}{n}\sum_{i=1}^{n}\mathbb{P}(\lambda_i  \le x).
\]
We shall write $\E(F_{A_n})$ in short. The corresponding probability law is called the {\it expected empirical spectral measure}.

Suppose the entries of $\{A_n\}$ are defined on some probability space $(\Omega, \mathcal{F}, \Pr)$. Let $F$ be a \textit{non-random} distribution function on $\mathbb{R}$. Let $C_F$ denote the set of continuity points of $F$. We say that the ESD of $A_n$ converges to $F$ \textit{weakly almost surely} if for almost every $\omega \in \Omega$ and for all  $t\in C_F$,
\[
    F_{A_n}(t)\rightarrow F(t) \ \ \mbox{as} \ n\rightarrow \infty.
\]

It is easy to see that the above convergence  implies that $\E(F_{A_{n}})$ converges weakly to $F$. This $F$ is called the \textit{limiting spectral distribution (or measure)} (LSD) of $\{A_n\}$.

The Wigner matrix is one of the most important matrices studied in Random Matrix Theory. An $n\times n$ real Wigner matrix $ W_n$ is a symmetric matrix with independent and identically distributed (i.i.d.) random entries $W_{ij}$ with mean $0$ and variance $1$. For the moment let us also assume that they have all moments finite. It is well-known that the almost sure LSD of $n^{-1/2}W_n$ is the standard semi-circle law with density
\begin{eqnarray} \label{eqn: semipdf4.1}
f(x) = \begin{cases} \frac{1}{2\pi} \sqrt{4-x^2}\ \ \text{\rm if} \ -2 < x <2, \\
0\ \ \text{\rm otherwise.}
\end{cases}
\end{eqnarray}
The odd moments of the semi-circle law are all $0$, and the $2k$-th moment is the $k$-th Catalan number
\[
    C_k = \frac{1}{k+1} \binom{2k}{k}, \ k \geq 1.
\]
In this article, we introduce a number-theoretic variant $\widetilde W_n$ of $W_n$, where
\begin{equation}\label{eq:vis-wig}
    \widetilde{W}_{ij} = W_{ij} \mathbf{1}_{\gcd(i, j) = 1}, \ 1 \le i, j \le n,
\end{equation}
and call it the \textit{visible Wigner matrix}. The name derives from the theory of visibility of lattice points. A point $(i, j)$ in the lattice $\Z^2$ is called \emph{visible} from the origin $(0, 0)$ if there are no other lattice points on the line segment joining $(0, 0)$ and $(i, j)$. It is a well-known fact from basic number theory that $(i, j)$ is visible if and only if $\gcd(i, j) = 1$. Note that there are still of the order of $n^{2}$ entries in $\widetilde{W}_n$ which are not identically $0$, and hence the matrix cannot be considered as sparse.

In a similar vein, the corresponding \textit{invisible Wigner matrix} $\widehat{W}_n$ is defined as follows:
\begin{equation}\label{eq:invis-wig}
    \widehat{W}_n=((\widehat{W}_{ij}))_{1\leq i, j \leq n} = ((W_{ij} \mathbf{1}_{\gcd(i, j) \neq 1}))_{1\leq i, j \leq n}\,.
\end{equation}
We show that the ESDs of both $n^{-1/2}\widetilde W_n$ and $n^{-1/2}\widehat{W}_n$ converge weakly almost surely. A comparison of the histograms of the ESD of a visible Wigner,  an invisible Wigner, and a standard Wigner matrix is provided in Figure~\ref{fig:esd}.

\begin{figure}[ht]
    \centering
    \includegraphics[scale = 0.4]{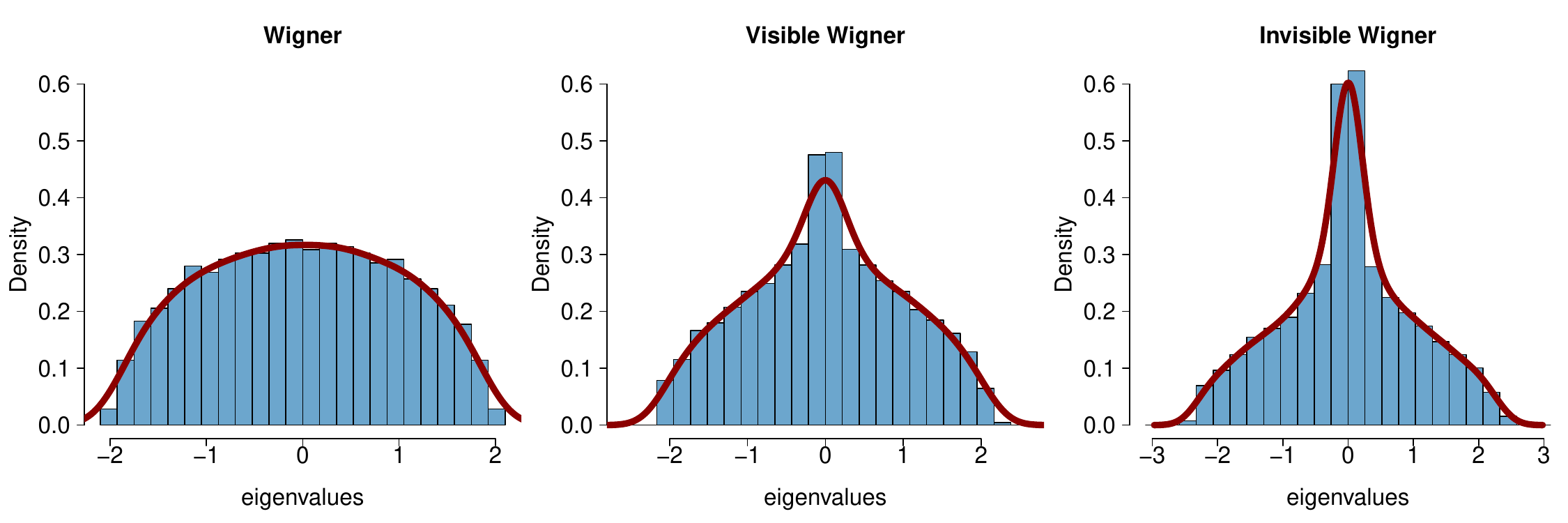}
    \caption{Histogram and kernel density estimate of the ESDs of $n^{-1/2}W_n$, $n^{-1/2}\tilde{W}_n$ and $n^{-1/2}\hat{W}_n$, for $n = 1000$ and with standard Gaussian entries. The eigenvalues of $n^{-1/2}\tilde{W}_n$ and $n^{-1/2}\hat{W}_n$ have been normalised by their limiting standard deviations, $\sqrt{6/\pi^2}$ and $\sqrt{1 - 6/\pi^2}$, respectively, so that all three LSDs have unit second moment.}
    \label{fig:esd}
\end{figure}

The second moment of the LSD of $n^{-1/2}\tilde{W}_n$ can be computed without much effort. Letting $\phi(\cdot)$ denote \textit{Euler's totient function}, we have
\begin{align*}
    \frac{1}{n}\E\big[\Tr(n^{-1/2}\tilde{W}_n)^2\big]
        &= \frac{1}{n^2}\E\sum_{1\leq i, j\leq n} \widetilde{W}_{n, ij}^2 \\
		&= \frac{1}{n^2}\sum_{\substack{1\leq i, j\leq n \\ \gcd(i, j) = 1}} 1 \\
        &= \frac{1}{n^2} \bigg(2 \sum_{1\leq j \le n} \phi(j) - 1\bigg) \\
        &= \frac{6}{\pi^2} + O\bigg(\frac{\log n}{n}\bigg),
\end{align*}
where we have used the following elementary fact from number theory that (see, for example, page 36 of \cite{montgomery2007}):
\[
    \sum_{1\leq j \le n} \phi(j) = \frac{3n^2}{\pi^2} + O(n\log n).
\]
Therefore the second moment of the LSD of $n^{-1/2}\tilde{W}_n$ is $\frac{6}{\pi^2}$, and a fortiori, the second moment of the LSD of $n^{-1/2}\widehat W_n$ is $1 - \frac{6}{\pi^2}$.

In this context, note that if $X_1, X_2$ are i.i.d. discrete uniform variables over $[n] := \{1, \ldots, n\}$, then
\begin{equation}\label{eq:gcdheuristic}
    \Pr(\gcd(X_1, X_2) = 1) = \frac{1}{n^2}\sum_{\substack{1\leq i, j\leq n \\ \gcd(i, j) = 1}} 1 = \frac{1}{n}\E\big[\Tr(n^{-1/2}\tilde{W}_n)^2\big] \to \frac{6}{\pi^2}.
\end{equation}
A heuristic probabilistic explanation of (\ref{eq:gcdheuristic}) can be given via Cram\'er's model in number theory, which posits that divisibility by different primes are independent events. Thus, denoting \emph{heuristic equality} by $\appeq$, we have
\begin{align*}
    \lim_{n \to \infty} \Pr(\gcd(X_1, X_2) = 1)
    &= \lim_{n \to \infty} \Pr(\text{no prime } p \le n \text{ divides } \gcd(X_1, X_2)) \\
    &\appeq \lim_{n \to \infty} \prod_{p\ \text{prime},\ p \le n} \Pr(p \text{ does not divide } \gcd(X_1, X_2)) \\
    &= \lim_{n \to \infty} \prod_{p\ \text{prime},\ p \le n} \big(1 - \Pr(p \text{ divides } \gcd(X_1, X_2)\big) \\
    &= \lim_{n \to \infty} \prod_{p\ \text{prime},\ p \le n} \big(1 - \Pr(p \text{ divides } X_1) \Pr(p \text{ divides } X_2)\big) \\
    &= \lim_{n \to \infty} \prod_{p\ \text{prime},\ p \le n} \bigg(1 - \frac{1}{p^2} + O\bigg(\frac{1}{n}\bigg)\bigg) \\
    &= \prod_{p\ \text{prime}} \bigg(1 - \frac{1}{p^2}\bigg) \\
    &= \frac{1}{\zeta(2)}=\frac{6}{\pi^2},
\end{align*}
where the penultimate equality follows from Euler's product formula for the Riemann zeta function $\zeta(\cdot)$.

In general, the limit moments can be described in terms of \textit{independence polynomials} of certain trees. Consider a finite graph $G = (V, E)$, where $V$ is the set of vertices and $E$ is the set of edges. Recall that an \textit{independent set} in a graph is a collection of vertices between which no edges exist. Let
\[
    i_m(G) = \text{number of independent sets of }  G \text{ of size } m.
\]
Then the independence polynomial of $G$ is defined as
\[
    I_G(z) = \sum_{m = 0}^{\alpha(G)}i_m(G) z^m,
\]
where $\alpha(G)$ denotes the \textit{independence number} (the size of the largest independent set) of $G$.

Determining the probability that randomly chosen integers are co-prime is a well-studied problem in elementary number theory \cite{nymann1972probability,laszlo2002probability,hu2014pairwise,de2015counting}. Imagining that the vertices of a finite graph carries independent uniformly random integers in $[n]$, \cite{hu2014pairwise} and \cite{de2015counting} consider the question of determining the limiting probability, as $n \to \infty$, that these random integers are pairwise co-prime along the edges of the graph, i.e. the two random integers at the end-vertices of any edge are co-prime. The following polynomial appears in these probability expressions.
With $|V|$ denoting the number of elements of $V$, let
\begin{align}\label{eq:poly_def}
    Q_G(z) &:= \sum_{m = 0}^{|V|} i_m(G) (1 - z)^{|V| - m} z^m \\ \nonumber
    &= (1 - z)^{|V|} I_G((1 - z)^{-1} z)\,.
\end{align}
Now let $X_1, \ldots, X_k$ be i.i.d.~and uniformly distributed on $[n]$. Let $G = (V, E)$ be a graph on $k$ vertices labeled $1, \ldots, k$. Then \cite{hu2014pairwise} shows that as $n \to \infty$,
\begin{equation}\label{eq:hu2014}
    \Pr\big(\gcd(X_i, X_j) = 1 \text{ for all } 1\leq i, j \leq k, \ (i, j) \in E\big)= A_{G} + O\bigg(\frac{(\log n)^{k - 1}}{n}\bigg),
\end{equation}
where
\[
    A_{G} = \prod_{p \,\, \text{prime}}Q_{G}\bigg(\frac{1}{p}\bigg).
\]
A different expression for $A_G$ was obtained in \cite{de2015counting}. To give an example, if $G$ is the complete graph on $k$ vertices, then it is easy to see that
\[
    Q_G(z) = (1 - z)^k + k (1 - z)^{k - 1} z = (1 - z)^{k - 1}(1 + (k - 1)z).
\]
Thus
\[
    A_G = \prod_{p \,\, \text{prime}}\bigg(1 - \frac{1}{p}\bigg)^{k - 1} \bigg(1 + \frac{k - 1}{p}\bigg).
\]
Here $A_G$ is the limiting probability that $k$ randomly chosen integers are pairwise co-prime. The above expression for $A_G$ was obtained in an earlier work of L. T\'oth \cite{laszlo2002probability}.

We will see that the moments of the LSD of $n^{-1/2}\widetilde{W}_n$ are sums of different $A_G$'s where $G$ runs through a certain family of trees. The moments of the LSD of $n^{-1/2}\widehat{W}_n$ have more intricate representations in terms of various $A_G$'s, where $G$ runs through a certain family of forests (recall that a forest is a disjoint union of trees).

\section{Preliminaries}
First, we state a convenient assumption on the moments of the entries of our random matrices.

\begin{assumption} \label{assmp:entries}
For every $n$, $\{x_{ij}=x_{ij,n}, i\leq j \}$ are independent with mean zero and variance 1. Moreover,
\[
    \sup_{i, j, n} \E(|x_{i,j,n}|^k) \leq B_k < \infty, \ \ \text{for all}\ \ k\geq 1.
\]
\end{assumption}

The $h$-th moment of  $F_{A_n}$ is given by
\begin{equation}\label{eq:tracemoment}
    m_h(F_{A_n}) = \frac{1}{n} \sum_{i=1}^n \lambda_i^h = \frac{1}{n} \mathrm{Tr}(A_n^h) =\tr (A_n^h)=  m_h (A_n) \mbox{ (say)},
\end{equation}
where $\mathrm{Tr}$ and $\mathrm{tr}$ denote the trace and the normalised trace. Relation \eqref{eq:tracemoment} is known as the \textit{trace-moment} formula. Further, the moments of the EESD are given by
\[
    m_h(\E(F_{A_{n}}))=\E\big[\frac{1}{n} \mathrm{Tr}(A_n^h)\big]=\E[m_h(A_n)].
\]
We now give some basic results pertaining to the \emph{moment method} which is what we will use to establish the existence of the LSD. Consider the following conditions:\vskip5pt

\noindent (M1) For every $h \geq 1$,  $\E[m_h (A_n)] \rightarrow
m_h$, which is finite. \vskip3pt

\noindent (U) The moment sequence  $\{m_h\}$ corresponds to a unique probability law.\vskip3pt

\noindent (M4) For every $h \geq 1$,  $\displaystyle{\sum_{h=1}^\infty \E\big[m_h (A_n)-\E[m_h (A_n)]\big]^4 < \infty}$.
\vskip5pt

Incidentally, the sequence $\{m_h\}$ is automatically a moment sequence of some probability distribution. Condition (U) insists that this distribution be unique.

It is easy to see that if (M1) and (U) hold, then the EESD of $A_n$ converges in distribution to the distribution function $F$ determined by its moments $\{m_h\}$. If further (M4) holds, then the ESD converges to $F$ almost surely. See Lemma 1.2.4 of \cite{bose2018} for a proof.

Computation of $m_h$ involves identifying the ``leading'' terms and calculating their contribution in the following expansion ($x_{i,j}$ is the $(i,j)$-th entry of $A_n$):
\[
    \E[\mbox{tr}(A_n^h)]=\sum_{1\leq i_1,i_2,\ldots,i_h\leq n}\E[x_{i_1,i_2}x_{i_2,i_3}\cdots x_{i_{h-1},i_h}x_{i_h,i_1}].
\]
See \cite{bose2018} for details of this  approach and its application to several random matrices. We need the following developments from there for our purposes.

For fixed $h$ and $n$, a \textit{circuit} of \textit{length} $h$ is a function \  $\pi:\{0,1,2,\ldots,h\} \rightarrow \{1,2,\ldots,n\} $ \ with  $\pi(0) = \pi(h)$. Then (M1) can be written as
\begin{equation}\label{eq:m1trace}
    \E[m_{h}(n^{-1/2} A_n)] = \E\bigg[\tr\bigg(\frac{A_n}{\sqrt n}\bigg)^h\bigg] = \frac{1}{n^{1+h/2}} \sum_{\pi: \ \pi \ \text{circuit}}\hspace{-5pt} \E \X_{\pi}\rightarrow m_h,
\end{equation}
where
\[
    \X_{\pi} = x_{\pi(0), \pi(1)} x_{\pi(1),\pi(2)} \cdots x_{\pi(h-2),
\pi(h-1)} x_{\pi(h-1), \pi(h)}.
\]
A circuit $\pi$ is \textit{matched} if each \textit{unordered} pair $(\pi(i-1),\pi(i))$, $1\leq i \leq h$, is repeated at least twice. If $\pi$ is non-matched, then $\E(\X_{\pi})=0$ and does not contribute to (\ref{eq:m1trace}). If each value is repeated \textit{exactly twice} (so $h$ is even), then $\pi$ is \textit{pair-matched}\index{circuit, pair-matched} and $\E(\X_{\pi})=1$. 

\begin{fact}\label{fact:pm_only}
    Under Assumption~\ref{assmp:entries}, by an easy counting argument, it can be shown that the entire contribution from non-pair-matched circuits is negligible for $n^{-1/2} W_n$ (see \cite{bose2018} for details). By the same argument, this is also the case for $n^{-1/2} \widehat{W}_n$ and $n^{-1/2} \widetilde{W}_n$. Hence we need to consider only the cases where $h$ is even. In addition, the LSD of $n^{-1/2} W_n$ is determined by the \textit{counts} of different types of pair-matched circuits.
\end{fact}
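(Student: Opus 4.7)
The plan is to verify two claims: (i) for $A_n \in \{W_n, \widetilde W_n, \widehat W_n\}$, non-pair-matched circuits contribute negligibly to $\E[m_h(n^{-1/2} A_n)]$, and (ii) for the plain Wigner matrix, $m_h$ depends only on the counts of pair-matched circuits. The unifying observation is the pointwise domination $|\widetilde W_{ij}|, |\widehat W_{ij}| \le |W_{ij}|$, which yields $|\E\X_\pi| \le \E|\X_\pi^{(W)}|$ for the modified matrices; Assumption~\ref{assmp:entries} combined with H\"older's inequality then gives $\E|\X_\pi^{(W)}| \le B_h$ uniformly in $\pi$. It therefore suffices to bound the \emph{number} of non-pair-matched circuits of length $h$.

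I would group circuits by their \emph{word}---the equivalence class under relabelling of the range $[n]$. For a matched circuit $\pi$ of length $h$, let $t$ denote the number of distinct unordered edge-pairs $(\pi(i-1),\pi(i))$, with respective multiplicities $m_1, \ldots, m_t \ge 2$, and let $v$ denote the number of distinct values of $\pi$. The associated multigraph is connected (being traced out by $\pi$), so $v \le t + 1$. If $\pi$ is \emph{not} pair-matched, some $m_i \ge 3$, whence $h = \sum_i m_i \ge 2(t-1) + 3 = 2t + 1$, and so $v \le t + 1 \le (h+1)/2$. Since there are $O_h(1)$ words and each is realised by at most $n^v$ circuits, the non-pair-matched contribution to $n^{-(1+h/2)}\sum_\pi \E\X_\pi$ is $O(n^{(h+1)/2 - 1 - h/2}) = O(n^{-1/2})$, which vanishes.

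For pair-matched $\pi$, independence, mean zero, and unit variance give $\E\X_\pi^{(W)} = 1$ exactly, so
\[
    m_h = \lim_{n\to\infty} \frac{1}{n^{1+h/2}}\,\#\{\pi : \pi \text{ is a pair-matched circuit of length } h\},
\]
which decomposes over words and depends only on how many circuits realise each word---this is the precise sense in which the LSD of $n^{-1/2}W_n$ is determined by counts. The argument is quite routine once the counting bound $v \le (h+1)/2$ is established; the extra observation needed for $\widetilde W_n$ and $\widehat W_n$ is simply that the indicator truncations only shrink $|\X_\pi|$ pointwise, so they cannot revive any non-pair-matched contribution, and the bookkeeping from \cite{bose2018} carries over verbatim. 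The main obstacle in the wider project, not in this fact per se, will be evaluating the pair-matched contributions for the visible and invisible cases---where the $\mathbf 1_{\gcd(\cdot,\cdot)=1}$ factors turn each word's asymptotic count into a number-theoretic quantity of the form $A_G$ from the introduction.
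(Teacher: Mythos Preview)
Your argument is correct and is precisely the standard counting argument the paper defers to \cite{bose2018} for: bound $|\E\X_\pi|$ uniformly via Assumption~\ref{assmp:entries}, observe that matched but non-pair-matched words have at most $(h-1)/2$ distinct letters (equivalently, at most $(h+1)/2$ free vertices via the connected-graph bound $v\le t+1$), and conclude the scaled contribution is $O(n^{-1/2})$. The paper itself gives no proof of this Fact beyond the citation, and your pointwise-domination remark $|\widetilde W_{ij}|,|\widehat W_{ij}|\le |W_{ij}|$ is exactly the ``same argument'' the paper invokes to extend the conclusion to the visible and invisible variants.
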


For any fixed length $h$, we group the circuits into \textit{equivalence classes} via the following equivalence  relation: $\pi_1\sim\pi_2$\index{circuit, equivalence} \index{equivalent circuits} if and only if for all unordered pairs
$(i,j)$, $1\leq i, j \leq h$,
\begin{equation}\label{eq: pidef} (\pi_1(i-1), \pi_1(i)) = (\pi_1(j-1),
\pi_1(j)) \Longleftrightarrow (\pi_2(i-1), \pi_2(i)) = (\pi_2(j-1),
\pi(j)).\end{equation}
    \noindent Any equivalence class can be indexed by a partition of $\{1,2,\ldots,h\}$. Each partition block identifies the positions of the matches. We label these partitions by \textit{words} $w$ of
length $h$ of letters where  the first occurrence of each
letter is in alphabetical order. \index{alphabetical order}

As an example, suppose $h = 4$. Consider the partition $\{ \{1,4\}, \{2,3\}\}$. It is then represented by the word $abba$. This identifies all circuits $\pi$ for which $(\pi(0), \pi(1))= (\pi(3), \pi(4))$ and $(\pi(1), \pi(2))=(\pi(2), \pi(3))$ (and it is understood that the two pairs are not equal). This is the same as saying $\pi(0)=\pi(4)$ and $\pi(1)=\pi(3)$. Thus $\pi(0), \pi(1)$ and $\pi(2)$ can be chosen in $n (n-1)(n-2)$ ways. We shall assume that they can be chosen ``freely'', that is in $n^3$ ways, because the additional number of terms become negligible asymptotically after scaling.

Let $w[i]$ denote the $i$-th entry (letter) of $w$. The equivalence class $\Pi_A(w)$ and a related class which we shall need,  corresponding to $w$ are defined by
\begin{align*}
    \Pi_A(w) &= \{\pi: w[i] = w[j] \Leftrightarrow (\pi(i-1),\pi(i)) = (\pi(j-1),\pi(j))\}; \\
    \Pi_A^{\prime}(w) &= \{\pi: w[i] = w[j] \Rightarrow (\pi(i-1),\pi(i)) = (\pi(j-1),\pi(j))\}.
\end{align*}
Notions connected to circuits extend to words in an obvious way. For instance, $w$ is {\it pair-matched} if every letter appears exactly twice. By Fact~\ref{fact:pm_only}, only pair-matched words of even length are relevant.

Any $i$ (or $\pi(i)$ by abuse of notation) will be called a \textit{vertex}. It is \textit{generating}, if either $i=0$ or $w[i]$ is the \emph{first} occurrence of  a letter. Otherwise it is {\it non-generating.}  For example, for $w=abba$, the vertices $\pi(0), \pi(1)$ and $\pi(2)$ are generating, while for $w = ababcc$, $\pi(0),\ \pi(1),\ \pi(2)$ and $\pi(5)$ are generating. Clearly, for any word, it is only the generating vertices that can be freely chosen.

We shall use the following generic notation: For any sequence of matrices $\{A_n\}$, and any word of length $w$,
\[
    p_A(w)=\lim \frac{1}{n^{1+k}} \#\Pi_A(w)=\lim \frac{1}{n^{1+k}} \#\Pi_A^{\prime}(w),
\]
whenever the limits exist and are equal.

Note that for the Wigner matrix,
\begin{equation}\label{eq:c1c2}
    w[i]=w[j] \iff
        \begin{cases}
            \text{either}\ (\pi(i-1)=\pi(j),   \pi(i)=\pi(j-1)) \ \text{(called a (C2) constraint)}&\\
            \text{or}\ (\pi(i-1)=\pi(j-1),  \pi(i)=\pi(j))\ \text {(called a (C1) constraint)}.&\\
        \end{cases}
\end{equation}
A pair-matched $w$ is called \textit{Catalan} if there is at least one double letter, and successive removal of  double letters leads to the empty word.  For example,  $aabb, abccba$ are Catalan words but $ababcc$ is not. Let $\mathcal{C}(2k)$ denote the set of all Catalan  words of length $2k$. It is well-known that this sub-class of words are the relevant words for the Wigner matrix.\vskip5pt

\begin{fact}\label{fact:pw}
    Under Assumption~\ref{assmp:entries}, for the Wigner matrix $n^{-1/2}W_n$ (see \cite{bose2018} for details),
\begin{equation}\label{eq:pistarwigner}
p_W(w)=\lim \frac{1}{n^{1+k}} \#\Pi_W(w)=\lim \frac{1}{n^{1+k}} \#\Pi_W^{\prime}(w)=
\begin{cases}{0}&\text {if $w\notin \mathcal{C}(2k)$},\\
1&\text {if $w\in  \mathcal{C}(2k)$.}\\
\end{cases}
\end{equation}
Moreover, for every Catalan word, only the (C2) constrained circuits contribute. Further,
\ $\# \mathcal{C}(2k)=C_k$.
This shows that condition (M1) is satisfied with $2k$-th limit moment
\[
    m^{\mathrm{(W)}}_{2k}=\sum_{w\in \mathcal{C}(2k)}p_W(w)
=\frac{1}{k+1} \binom{2k}{k},
\]
and the odd moments $m^{\mathrm{(W)}}_{2k+1}=0$ for all $k \geq 0$. These numbers satisfy condition (U) and  identify the limit of the EESD of $n^{-1/2}W_n$ as the semi-circle law. See \cite{bose2018} for details.
\end{fact}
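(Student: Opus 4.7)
The plan is to establish Fact~\ref{fact:pw} by the classical moment-method argument for Wigner matrices, in four steps. By Fact~\ref{fact:pm_only} we may restrict attention to pair-matched words $w$ of even length $h = 2k$. Such a word has exactly $k$ distinct letters, so exactly $k+1$ generating vertices (namely $\pi(0)$ and the $k$ first-occurrence positions). Since only generating vertices are freely chosen, one immediately gets $\#\Pi_W(w) \le n^{k+1}(1+o(1))$, matching the normalization $n^{1+k}$ in the definition of $p_W(w)$. Thus $p_W(w) \le 1$ for every pair-matched $w$, and the task is to decide when asymptotic equality holds.

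Next I would carry out the \emph{double-letter reduction}. Suppose $w$ has a double letter at positions $i, i+1$, i.e.\ $w[i] = w[i+1]$. By (\ref{eq:c1c2}), the matching forces either the (C1) constraint $\pi(i-1) = \pi(i) = \pi(i+1)$, which collapses three vertices and so loses at least one power of $n$, or the (C2) constraint $\pi(i-1) = \pi(i+1)$ with $\pi(i)$ completely free. Only (C2) keeps the count at order $n^{k+1}$. Deleting positions $i, i+1$ produces a shorter pair-matched word $w^\prime$ of length $2(k-1)$, and one obtains $\#\Pi_W^{\prime}(w) = n \cdot \#\Pi_W^{\prime}(w^\prime)(1+o(1))$ from the (C2) choice. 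Iterating this reduction: if $w\in\mathcal{C}(2k)$, the process terminates at the empty word, giving $\#\Pi_W(w) = n^{k+1}(1+o(1))$ and therefore $p_W(w) = 1$ realized entirely by (C2) matches.

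If $w$ is pair-matched but non-Catalan, the reduction halts at a nonempty word $w^*$ all of whose matches span non-adjacent positions. In the graph-theoretic picture, each circuit $\pi$ traces a closed walk on an undirected multigraph with $k$ distinct edges; for Catalan $w$ this graph is a plane tree on $k+1$ vertices (exactly one vertex per generating vertex), whereas for non-Catalan $w$ the walk-graph must contain a cycle, forcing a nontrivial coincidence among the generating vertices. That coincidence eliminates at least one free choice, so $\#\Pi_W(w) = O(n^k)$ and $p_W(w) = 0$.

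Finally, Catalan words of length $2k$ are in bijection with Dyck paths of length $2k$ (open a bracket at each first occurrence, close it at the second), whence $\#\mathcal{C}(2k) = C_k$. Summing the Catalan-word contributions yields the limit moments $m^{\mathrm{(W)}}_{2k} = C_k$; the odd moments vanish because odd-length circuits cannot be pair-matched. Since $C_k \le 4^k$, Carleman's condition is satisfied, so (U) holds and the moment sequence identifies the semicircle law with density (\ref{eqn: semipdf4.1}). The main obstacle is the combinatorial bookkeeping in the third step: one must rigorously show that after the Catalan reduction stalls, any surviving match genuinely costs a factor of $n$, which is precisely where the tree-versus-cycle dichotomy produces the Catalan numbers and hence the semicircle law.
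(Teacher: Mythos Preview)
Your outline is a correct sketch of the standard moment-method proof of the semicircle law: the double-letter reduction for Catalan words, the tree-versus-cycle counting for non-Catalan pair-matched words, the Dyck-path bijection giving $\#\mathcal{C}(2k)=C_k$, and Carleman's condition for (U). Note, however, that the paper itself does not prove Fact~\ref{fact:pw}; it records it as a known result and refers the reader to \cite{bose2018} for the details. So there is no ``paper's own proof'' to compare against beyond the fact that your sketch is exactly the argument developed in that reference.

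One small clarification on your third step: the cleanest way to see that a non-Catalan pair-matched word contributes $0$ is to note that \emph{for every choice} of (C1)/(C2) assignments to the $k$ letter pairs, the resulting quotient graph on the $2k$ circuit positions has exactly $k$ edges; if the word is non-Catalan the quotient cannot be a tree on $k+1$ vertices (for any such assignment), so it has at most $k$ vertices and hence at most $n^{k}$ circuits. Your ``walk-graph contains a cycle'' heuristic is the right intuition, but the rigorous version must quantify over all $2^k$ constraint assignments, not just the all-(C2) one.
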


\begin{remark}
    \label{remark:wignerequiv} From the standard arguments to prove  (\ref{eq:pistarwigner}) (see \cite{bose2018}), it immediately follows that for $n^{-1/2}\widehat{W}_n$ and $n^{-1/2}\widetilde W_n$, as in Fact~\ref{fact:pw},  the counts $\#\Pi_{\widehat{W}}(w)$ and $\#\Pi^{\prime}_{\widehat{W}}(w)$, as well as $\#\Pi_{\widetilde{W}}(w)$ and $\#\Pi_{\tilde{W}}^\prime(w)$ continue to remain equivalent, and $p_{\widehat{W}}(w)=0$ and $p_{\widetilde{W}}(w)=0$ if $w\notin \mathcal{C}(2k)$. We shall show later that  $p_{\widehat{W}}(w)$ and $p_{\widetilde{W}}(w)$ exist for each $w\in \mathcal{C}(2k)$ (with zero contribution from the (C1) constrained circuits). That would immediately confirm the convergence of their EESDs.
\end{remark}

\begin{fact} \label{fact:m4}
    Under Assumption~\ref{assmp:entries}, by another counting argument, condition (M4) is valid for $n^{-1/2}W_n$ (see \cite{bose2018} for details). Hence the almost sure limit of the ESD of $n^{-1/2}W_n$ is also the semi-circle law.
\end{fact}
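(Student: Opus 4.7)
The plan is to verify condition (M4), i.e., to show that $\sum_{n} \E\bigl[m_h(n^{-1/2}W_n) - \E m_h(n^{-1/2}W_n)\bigr]^4 < \infty$ for each fixed $h$, and then invoke Borel--Cantelli. Expanding the fourth central moment via the trace-moment formula gives
\[
    \E\bigl[m_h(n^{-1/2}W_n) - \E m_h(n^{-1/2}W_n)\bigr]^4 = \frac{1}{n^{4+2h}} \sum_{\pi_1,\pi_2,\pi_3,\pi_4} \E\prod_{j=1}^4 (\X_{\pi_j} - \E\X_{\pi_j}),
\]
where the sum runs over 4-tuples of circuits of length $h$. By Assumption~\ref{assmp:entries} (together with the Cauchy--Schwarz inequality and the uniform bound $B_{4h}$), every individual expectation is dominated by a constant depending only on $h$, so the entire problem reduces to counting the 4-tuples that survive.

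Call a 4-tuple \emph{cross-matched} if every $\pi_j$ shares at least one unordered edge with some other $\pi_k$. If some $\pi_{j_0}$ is edge-disjoint from the rest, independence of the Wigner entries factorises the joint expectation and the factor $\E(\X_{\pi_{j_0}} - \E\X_{\pi_{j_0}})$ vanishes. Moreover, the combined edge multiset of size $4h$ must itself be (jointly) pair-matched, otherwise some edge is isolated and its mean-zero factor kills the expectation. Up to lower-order contributions (from edges matched with multiplicity three or higher, suppressed by the $\sqrt n$ scaling exactly as in the proof of Fact~\ref{fact:pw}), the surviving tuples fall into two classes under the edge-sharing relation on $\{\pi_1,\pi_2,\pi_3,\pi_4\}$: (i) all four circuits lie in a single connected component, and (ii) they split into two components containing two circuits each.

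The vertex-counting bound familiar from Fact~\ref{fact:pw} says that a component containing $r$ circuits whose edges are jointly pair-matched (so with $rh/2$ distinct edges) admits at most $rh/2 + 1$ free vertex choices. Hence case (i) contributes $O(n^{2h+1})$ tuples while case (ii) contributes $O(n^{2h+2})$ tuples. After dividing by $n^{4+2h}$, this yields $\E[m_h - \E m_h]^4 = O(n^{-2})$, which is summable in $n$ and thereby establishes (M4). The main technical hurdle is the vertex-count in case (ii): one must verify that two circuits whose edge multisets are jointly pair-matched still obey the Catalan-style bound of at most $h + 1$ free vertices, which is done by induction on the successive removal of doubled letters exactly as in the proof of Fact~\ref{fact:pw}, now applied to the concatenated word of length $2h$. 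Once (M4) is in hand, Markov's inequality and Borel--Cantelli give $m_h(n^{-1/2}W_n) - \E m_h(n^{-1/2}W_n) \to 0$ almost surely for each $h$; combined with the moment convergence $\E m_h(n^{-1/2}W_n) \to m_h^{\mathrm{(W)}}$ from Fact~\ref{fact:pw} and the uniqueness condition (U) for the semi-circle law, the ESD of $n^{-1/2}W_n$ converges to the semi-circle law weakly almost surely.
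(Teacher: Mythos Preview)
The paper does not supply its own proof of this fact; it merely records the statement and defers to \cite{bose2018}. Your sketch is precisely the standard fourth-moment counting argument that one finds there, and it is correct in its essentials: expand the fourth central moment over $4$-tuples of circuits, discard tuples that are not jointly matched or not cross-matched, and then bound the number of free vertex choices component by component to obtain the $O(n^{-2})$ rate.

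One small quibble. Your justification for the $h+1$ free-vertex bound in case~(ii)---``induction on successive removal of doubled letters \ldots\ applied to the concatenated word of length $2h$''---does not quite work as written, because two cross-matched circuits need not share a base point and so do not literally concatenate into a single circuit or word. The clean argument is graph-theoretic: once the $r$ circuits in a component are cross-matched, the multigraph on vertex-values built from their (at most $rh/2$) distinct edges is connected, and a connected graph with $e$ edges has at most $e+1$ vertices. This also renders your ``up to lower-order contributions'' caveat unnecessary, since edges of multiplicity $\ge 3$ only reduce the number of distinct edges and hence sharpen the vertex bound.
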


As before, this counting argument, and hence  condition (M4), remains valid for both $n^{-1/2}\widetilde W_n$ and $n^{-1/2}\widehat{W}_n$. As a consequence, the almost sure convergence of their ESDs would follow once we show the convergence of their EESDs.

\noindent \textbf{Catalan words, trees and associated polynomials}.
In order to describe the limit moments of the visible and the invisible Wigner matrices, we need to tie up Catalan words with certain trees and the associated polynomials $Q$ introduced earlier. Suppose $w$ is a Catalan word of length $2k$. Then there are exactly $(k + 1)$ generating vertices. Note that it is enough to consider only circuits all whose constraints are (C2). Consider the cycle graph $0 \, \mathemdash \, 1 \, \mathemdash \, \cdots \, \mathemdash \, (2k - 1) \, \mathemdash \, 0$. Identify any two vertices $u, v$ if the Catalan word $w$ forces the equality $\pi(u) = \pi(v)$ (via a (C2) constraint), for circuits in $\Pi_W(w)$. This results in a tree $G(w)$ on $(k + 1)$ vertices (see Figure~\ref{fig:gwpic} for an example). These trees also appear when one associates words with Dyck-paths (see, e.g., \cite{andguizei}).

 \begin{figure}[ht]
    \centering
    \includegraphics[scale = 1]{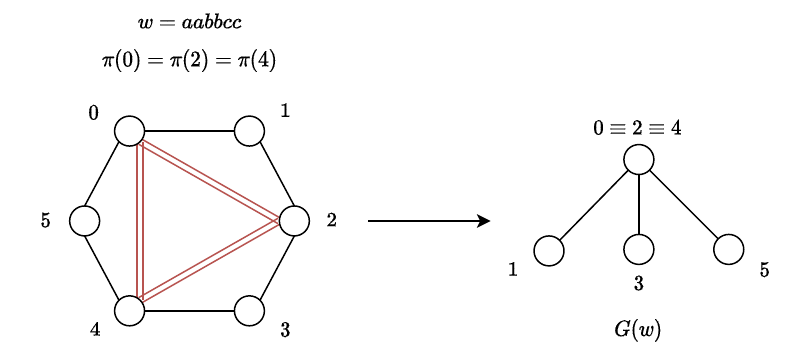}
    \caption{A pictorial description of how $G(w)$ is obtained for $w = aabbcc$ via vertex identification.}
    \label{fig:gwpic}
\end{figure}

For example, suppose $k = 1$. Then there is only one Catalan word $w = aa$. The corresponding tree $G(w)$ is the $2$-path $0 \,\, \mathemdash \,\, 1$, the corresponding unlabeled tree being $\circ \mathemdash \circ$. This tree has one independent set of size $0$ and two independent sets of size $1$. The associated polynomial, defined via (\ref{eq:poly_def}), is
\[
    Q_{G(w)}(z) = (1 - z)^2 + 2 (1 - z) z = 1 - z^2
\]
and
\begin{equation}\label{eq:zeta2}
    A_{G(w)} = \prod_{p\ \text{prime}}\bigg(1-\frac{1}{p^2}\bigg) = \frac{1}{\zeta(2)} = \frac{6}{\pi^2}.
\end{equation}
For $k=2$ there are two Catalan words, $w_1=abba$ and $w_2=aabb$, each with three generating vertices. The associated trees with three vertices are respectively $G(w_1)=0 \,\, \mathemdash \,\, 1 \,\, \mathemdash \,\, 2$ and  $G(w_2) = \{0 \,\, \mathemdash \,\, 1, 0 \,\, \mathemdash \,\, 2\}$. The unlabeled version of both of these trees is the $3$-path $\circ \mathemdash \circ \mathemdash \circ$.
Note that for this tree we have one independent set of size $0$, three independent sets of size $1$, and one independent set of size $2$. Therefore
\begin{align*}
    Q_{G(w_1)}(z) &= Q_{G(w_2)}(z) \\
                  &= (1 - z)^3 + 3 (1 - z)^2 z + (1 - z) z^2 \\
                  &= (1 - z) (1 + z - z^2).
\end{align*}
For $k = 3$, we have listed the $5$ Catalan words and the corresponding unlabeled trees in Table~\ref{tab:k3}.
\begin{table}[ht]
    \centering
    \caption{The 5 Catalan words of length $6$ and the associated unlabeled trees.}
    \label{tab:k3}
    \begin{tabular}{c|c|c}
        \toprule
        Word ($w$) & Unlabeled $G(w)$ & $Q_{G(w)}(z)$ \\
        \midrule
        \begin{tabular}{c}
             $aabbcc$ \\
             $abbcca$
        \end{tabular} &
        \begin{tikzpicture}[baseline=-25]
            \centering
            \tikzstyle{hollow node}=[circle,draw,inner sep=1.5]
            \node[hollow node]{}
                child{node[hollow node]{}}
                child{node[hollow node]{}}
                child{node[hollow node]{}}
            ;
            \addvmargin{3mm}
        \end{tikzpicture} &
        $(1 - z)^4 + 4 (1 - z)^3 z + 3 (1 - z)^2 z^2 + (1 - z) z^3$ \\
        \midrule
        \begin{tabular}{c}
             $abccba$ \\
             $abbacc$ \\
             $aabccb$
        \end{tabular} &
        \begin{tabular}{c}
        \begin{tikzpicture}
            \centering
            \tikzstyle{hollow node}=[circle,draw,inner sep=1.5]
            \node[hollow node]{}
                child{node[hollow node]{}}
                child{node[hollow node]{}
                    child{node[hollow node]{}}
                };
            ;
            \addvmargin{3mm}
        \end{tikzpicture}
        \end{tabular} &
        $(1 - z)^4 + 4 (1 - z)^3 z + 3 (1 - z)^2 z^2$ \\
        \bottomrule
    \end{tabular}
\end{table}

For a general $k$, we obtain via this procedure all unlabeled trees on $k + 1$ vertices. Each of these trees appear a specific number of times, corresponding to cyclic traversals of the tree that visit each edge exactly twice. One can thus partition the set of all Catalan words of length $2k$ into $T_k$ many equivalence classes, where $T_k$ is the number of all unlabeled trees on $k + 1$ vertices (the sequence $(T_k)_{k \ge 1}$ appears in the Online Encyclopedia of Integer Sequences (OEIS) as sequence number A000055  \cite{oeis}).

\section{The LSDs of the visible and the invisible Wigner}
The following lemma will be the main workhorse in the proofs of our results given below. For any Catalan word $w$ of length $2k$ and $i \in \{0, \ldots, 2k - 1\}$, let
\[
    B_{i}(w) := \{\pi \in \Pi_W^{\prime}(w) \mid \gcd(\pi(i), \pi(i + 1)) = 1\}.
\]
Note that $B_{i}(w)$ has been defined as a subset of $\Pi_W^{\prime}(w)$ by imposing an additional gcd constraint.
\begin{lemma}\label{lem:limit-subgraph-Gw}
For any $\ell \in \{1, \ldots, 2k\}$ and  any $0 \le i_1 < \cdots < i_{\ell} \le 2k - 1$, the limit
\[
    \lim_{n \to \infty}\frac{\# B_{i_1}(w) \cap \cdots \cap B_{i_{\ell}}(w)}{n^{1 + k}}
\]
exists and equals $A_{H_{w; i_1, \ldots, i_{\ell}}}$, where $H_{w; i_1, \ldots, i_{\ell}}$ is an explicit sub-graph of $G(w)$.
\end{lemma}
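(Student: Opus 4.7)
The plan is to parameterize the circuits in $\bigcap_{j=1}^\ell B_{i_j}(w)$ by tuples of values at the generating vertices, recognize the resulting sum as the probability appearing in (\ref{eq:hu2014}), and invoke that number-theoretic estimate.

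First, I would exploit Remark~\ref{remark:wignerequiv}: for a Catalan word $w$, the only circuits contributing to the leading order in $\#\Pi_W'(w)$ are the (C2)-constrained ones, while the (C1)-constrained circuits force additional coincidences among the generating vertices and give $o(n^{1+k})$. Under the (C2) identifications, the cycle $0 \mathemdash 1 \mathemdash \cdots \mathemdash (2k-1) \mathemdash 0$ quotients onto the tree $G(w)$ with $k+1$ vertices; write $[i] \in V(G(w))$ for the block containing index $i$. A (C2)-constrained circuit in $\Pi_W'(w)$ is then uniquely specified by an arbitrary assignment $x : V(G(w)) \to [n]$ of values to the generating vertices.

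Next, I would identify the subgraph $H = H_{w;i_1,\ldots,i_\ell}$ explicitly: each cycle edge $(i, i+1)$ (indices mod $2k$) projects to an edge $\{[i], [i+1]\}$ of $G(w)$, and every edge of $G(w)$ arises in this way from exactly two cycle edges. Set $V(H) = V(G(w))$ and
\[
    E(H) = \big\{ \{[i_j], [i_j+1]\} : 1 \le j \le \ell \big\},
\]
where repeated edges are counted once. Since $G(w)$ is a tree and has no loops, every listed element is a genuine edge. The gcd constraint defining $B_{i_j}(w)$ then becomes $\gcd(x_{[i_j]}, x_{[i_j+1]}) = 1$, i.e., an edge-constraint of $H$, so
\[
    \# \bigcap_{j=1}^\ell B_{i_j}(w) = \sum_{x \in [n]^{V(G(w))}} \prod_{\{u,v\} \in E(H)} \mathbf{1}_{\gcd(x_u, x_v) = 1} + o(n^{1+k}).
\]

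Finally, dividing by $n^{1+k}$, the main term is exactly the probability that $k+1$ i.i.d.\ $\mathrm{Uniform}([n])$ variables are pairwise co-prime along the edges of $H$, so (\ref{eq:hu2014}) applied to $H$ yields the limit $A_H$. The main obstacle is the combinatorial bookkeeping in Step 2 — correctly tracking which edges of $G(w)$ get ``activated'' by the chosen indices $i_1, \ldots, i_\ell$, and checking that the projection of each $(i_j, i_j+1)$ is indeed an edge rather than a loop (which holds because $G(w)$ is a tree obtained from a cyclic traversal). Everything else is a direct appeal to \cite{hu2014pairwise}; isolated vertices of $H$ (i.e., vertices of $G(w)$ incident to no activated edge) are harmless, since adjoining isolated vertices leaves $Q_G(z)$ unchanged and hence does not affect $A_H$.
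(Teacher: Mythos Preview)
Your proposal is correct and follows essentially the same approach as the paper: identify the gcd constraints with a subset of edges of $G(w)$, and then invoke the co-primality estimate \eqref{eq:hu2014} of \cite{hu2014pairwise}. The paper's own proof is very terse (it simply asserts ``it is then clear that'' the limit equals $A_H$), whereas you spell out the parameterization by generating vertices, the handling of (C1)-constrained circuits as an $o(n^{1+k})$ error, and the harmlessness of isolated vertices in $H$---all of which are implicit in the paper's argument.
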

\begin{proof}
Any $\pi \in B_{i_1}(w) \cap \cdots \cap B_{i_{\ell}}(w)$ satisfies
\begin{equation}\label{eq:gcd-cons}
    \gcd(\pi(i_j), \pi(i_j + 1)) = 1 \text{ for } j = 1, \ldots, \ell.
\end{equation}
Recall that the tree $G(w)$ was obtained from the cyclic graph $0 \, \mathemdash \, 1 \, \mathemdash \, \cdots \, \mathemdash \, (2k - 1) \, \mathemdash \, 0$ by identifying vertices $u, v$ whenever $\pi(u) = \pi(v)$. The constraints \eqref{eq:gcd-cons} put gcd constraints on a subset $E$ of the edges of $G(w)$. We denote the sub-graph of $G(w)$ induced by the edge set $E$ as $H_{w;i_1,\ldots,i_{\ell}}$. It is then clear that
\[
    \lim_{n \to \infty}\frac{\# B_{i_1}(w) \cap \cdots \cap B_{i_{\ell}}(w)}{n^{1 + k}} = A_{H_{w; i_1, \ldots, i_{\ell}}}.
\]
This completes the proof.
\end{proof}
\begin{remark}\label{rem:Gw}
    Note that $H_{w;0, \ldots, 2k - 1} = G(w)$.
\end{remark}

\begin{example}
    Consider the case $k = 3$ and the word $w = aabbcc$. $G(w)$ is depicted in Figure~\ref{fig:gwpic}. Let us find $H_{w; 1, 3}$. Note that while creating $G(w)$ we identified vertices $0$, $2$ and $4$, because of the (C2) constraints:
\[
    \pi(0) = \pi(2) = \pi(4).
\]
Now, in $B_1(w)\cap B_3(w)$, we have the gcd constraints
\[
    \gcd(\pi(1), \pi(2)) = \gcd(\pi(3), \pi(4)) = 1.
\]
These place gcd restrictions on the edges $0 \,\, \mathemdash \,\, 1$ and $0 \,\, \mathemdash \,\, 3$ and we obtain as $H_{w; 1, 3}$ the graph shown on Figure~\ref{fig:hwpic}.
\end{example}

\begin{figure}[ht]
    \centering
    \includegraphics[scale = 1]{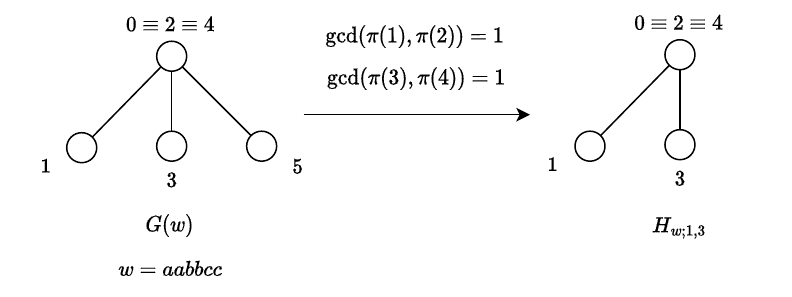}
    \caption{A pictorial description of how $H_{w; 1, 3}$ is obtained from $G(w)$ for $w = aabbcc$.}
    \label{fig:hwpic}
\end{figure}

We can now state our results on the LSDs of the visible and the invisible Wigner matrices.
\begin{theorem}\label{theorem:visiblewigner}
    Let $\widetilde W_n$ be the $ n \times n $ visible Wigner matrix with the entries  $\{x_{i,j,n}: 1 \le i \le j, \ j \geq 1, n \geq 1\} $ which satisfy Assumption~\ref{assmp:entries}. Then the EESD of $\{n^{-1/2}\widetilde W_n\}$ converges weakly to a probability law, say $\mu_{\widetilde{W}}$,  with compact support. Its ESD also converges to the same law weakly almost surely. This law is symmetric about $0$ and its even moments are given by
\begin{equation}\label{eq:moments}
    m_{2k}^{(\mathrm{VW})} = \sum_{w\in \mathcal{C}_{2k}} A_{G(w)} = \sum_{T \in \mathcal{T}_{k + 1}} n(T) A_T,
\end{equation}
where $\mathcal{T}_{k + 1}$ denotes the set of all unlabeled trees on $k + 1$ vertices, and $n(T)$ denotes the number of Catalan words $w$ of length $2k$ such that the unlabeled version of $G(w)$ is $T$.
\end{theorem}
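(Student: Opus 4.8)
The plan is to reduce everything to counting pair-matched circuits and then invoke Lemma~\ref{lem:limit-subgraph-Gw}. By Fact~\ref{fact:pm_only} and Fact~\ref{fact:m4} (together with the remarks immediately following them), the $(M4)$ condition and the reduction to pair-matched circuits carry over verbatim from the Wigner case to $n^{-1/2}\widetilde W_n$, so it suffices to verify $(M1)$, i.e. that $\E[m_h(n^{-1/2}\widetilde W_n)]$ converges for every $h$, together with the uniqueness condition $(U)$. Since the entries $\widetilde W_{ij}$ vanish unless $\gcd(i,j)=1$, and since $\widetilde W_{ij}=W_{ij}$ otherwise, for a pair-matched circuit $\pi$ we have $\E \widetilde\X_\pi = \mathbf 1\{\gcd(\pi(i),\pi(i+1))=1 \text{ for all } i=0,\ldots,2k-1\}$; in other words, $\widetilde\X_\pi$ contributes exactly when $\pi \in B_0(w)\cap B_1(w)\cap\cdots\cap B_{2k-1}(w)$ for the word $w$ associated to $\pi$'s matching pattern. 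By Remark~\ref{remark:wignerequiv}, for $w\notin\mathcal C(2k)$ the whole equivalence class contributes $o(n^{1+k})$ circuits, and among the circuits of a Catalan class, only the (C2)-constrained ones matter, so the count of relevant circuits of class $w$ equals $\#\big(B_0(w)\cap\cdots\cap B_{2k-1}(w)\big)$ up to negligible terms.

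Next I would assemble the moment. Writing $h=2k$, we get
\[
    \E\big[m_{2k}(n^{-1/2}\widetilde W_n)\big]
        = \frac{1}{n^{1+k}}\sum_{w\in\mathcal C(2k)} \#\big(B_0(w)\cap\cdots\cap B_{2k-1}(w)\big) + o(1).
\]
Applying Lemma~\ref{lem:limit-subgraph-Gw} with $\ell=2k$ and $(i_1,\ldots,i_{2k})=(0,1,\ldots,2k-1)$, together with Remark~\ref{rem:Gw} which identifies $H_{w;0,\ldots,2k-1}$ with $G(w)$, each summand converges to $A_{G(w)}$, whence $\E[m_{2k}(n^{-1/2}\widetilde W_n)]\to\sum_{w\in\mathcal C(2k)}A_{G(w)}$, and the odd moments vanish because no circuit of odd length is pair-matched. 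This gives the first equality in \eqref{eq:moments}. The second equality is a regrouping: by the discussion preceding Theorem~\ref{theorem:visiblewigner}, the map $w\mapsto G(w)$ partitions $\mathcal C(2k)$ according to the unlabeled isomorphism type $T$ of $G(w)$, and $A_{G(w)}=A_T$ depends only on that type (since $A_G$ is defined through $Q_G$, which depends only on the independence polynomial, hence only on the isomorphism class); collecting the $n(T)$ words of each type yields $\sum_{T\in\mathcal T_{k+1}}n(T)A_T$.

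Two loose ends remain. First, finiteness of $m_{2k}^{(\mathrm{VW})}$ and compactness of the support: since $0\le A_{G(w)}\le 1$ (each $A_G$ is a limiting probability, or directly: each factor $Q_G(1/p)\in[0,1]$ and the product converges), we get $0\le m_{2k}^{(\mathrm{VW})}\le \#\mathcal C(2k)=C_k\le 4^k$, so $m_{2k}^{(\mathrm{VW})}\le 4^k$; this sub-exponential growth gives $(U)$ via Carleman's condition and simultaneously forces the limiting law to have support in $[-2,2]$ (by comparing with the semi-circle moments, or directly from $\limsup_k (m_{2k}^{(\mathrm{VW})})^{1/2k}\le 2$). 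Symmetry about $0$ is immediate from the vanishing of all odd moments. Finally, $(M1)$ and $(U)$ give weak convergence of the EESD to $\mu_{\widetilde W}$, and $(M4)$ then upgrades this to weak almost sure convergence of the ESD, as recorded in the text following Fact~\ref{fact:m4}. The only place that needs genuine care is confirming that the $o(n^{1+k})$ error terms — coming from non-Catalan words, from the (C1)-constrained circuits inside Catalan classes, and from the ``free choice'' overcounting of generating vertices — are uniform enough to survive summation over the finitely many words $w$ of length $2k$; but this is exactly the content of Fact~\ref{fact:pw} and Remark~\ref{remark:wignerequiv}, so no new estimate is required.
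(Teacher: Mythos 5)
Your proposal is correct and follows essentially the same route as the paper: reduce to pair-matched Catalan circuits via Facts~\ref{fact:pm_only}--\ref{fact:m4} and Remark~\ref{remark:wignerequiv}, identify $\Pi'_{\widetilde W}(w)$ with $B_0(w)\cap\cdots\cap B_{2k-1}(w)$, apply Lemma~\ref{lem:limit-subgraph-Gw} with Remark~\ref{rem:Gw} to obtain $p_{\widetilde W}(w)=A_{G(w)}$, and then derive (U) and compact support from domination by the semi-circle moments. You spell out a few steps (the indicator form of $\E\widetilde\X_\pi$, the Carleman bound, the regrouping by tree isomorphism class) more explicitly than the paper does, but there is no substantive difference in method.
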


\begin{theorem}\label{theorem:invisiblewigner}
    Let $\widehat{W}_n$ be the  $ n \times n $ invisible Wigner matrix with the entries  $\{x_{i,j,n}: 1 \le i \le j, \ j \geq 1, n \geq 1\} $ which satisfy Assumption~\ref{assmp:entries}. Then the EESD of $\{n^{-1/2}\widehat{W}_n\}$ converges weakly to a probability law, say $\mu_{\widehat{W}}$, with compact support. Its ESD also converges to the same law weakly almost surely. This law is symmetric about $0$ and its even moments are given by
\begin{equation}\label{eq:inmoments}
    m_{2k}^{(\mathrm{IVW})}= \sum_{w \in \C_{2k}} p_{\hat{W}}(w),
\end{equation}
where
\[
    p_{\hat{W}}(w) = 1 + \sum_{\ell = 1}^{2k} (-1)^{\ell}\sum_{0 \le i_1 < \cdots < i_{\ell} \le 2k - 1} A_{H_{w; i_1, \ldots, i_{\ell}}}.
\]
\end{theorem}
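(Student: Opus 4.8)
The plan is to run the moment method exactly as in Fact~\ref{fact:pw}, with Lemma~\ref{lem:limit-subgraph-Gw} supplying the one genuinely new input. First I would invoke Fact~\ref{fact:pm_only} (which, as noted there, applies verbatim to $n^{-1/2}\widehat W_n$) to discard all non-pair-matched circuits; in particular the odd moments of the EESD vanish, which gives the symmetry about $0$. By Remark~\ref{remark:wignerequiv} the counts $\#\Pi_{\widehat W}(w)$ and $\#\Pi_{\widehat W}^{\prime}(w)$ are asymptotically equivalent and the equivalence class of $w$ contributes nothing unless $w$ is Catalan, so everything reduces to showing that, for each $w \in \C_{2k}$, the normalised contribution
\[
    p_{\widehat W}(w) = \lim_{n\to\infty}\frac{1}{n^{1+k}}\#\{\pi \in \Pi_W^{\prime}(w): \gcd(\pi(i),\pi(i+1))\neq 1 \text{ for all } 0 \le i \le 2k - 1\}
\]
exists and equals the displayed inclusion--exclusion sum; summing over $w \in \C_{2k}$ then yields \eqref{eq:inmoments} via \eqref{eq:m1trace}.

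The core step is an inclusion--exclusion over the $2k$ edge-positions of the circuit. Since $\widehat W_{ij} = W_{ij}\mathbf 1_{\gcd(i,j)\neq 1}$, for a pair-matched $\pi$ one has $\E\widehat{\X}_\pi = \prod_{i=0}^{2k-1}\mathbf 1_{\gcd(\pi(i),\pi(i+1))\neq 1}$ (the indicators being deterministic and $\E\X_\pi = 1$), and I would expand
\[
    \prod_{i=0}^{2k-1}\big(1 - \mathbf 1_{\gcd(\pi(i),\pi(i+1))=1}\big) = \sum_{\ell = 0}^{2k}(-1)^{\ell}\sum_{0 \le i_1 < \cdots < i_{\ell}\le 2k-1}\mathbf 1\{\pi \in B_{i_1}(w)\cap\cdots\cap B_{i_{\ell}}(w)\},
\]
the $\ell = 0$ term being $\mathbf 1\{\pi \in \Pi_W^{\prime}(w)\}$. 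Summing over $\pi \in \Pi_W^{\prime}(w)$, dividing by $n^{1+k}$, and letting $n \to \infty$ term by term (legitimate, as the number of terms depends only on $k$): the $\ell = 0$ term tends to $p_W(w) = 1$ by Fact~\ref{fact:pw}, and each term with $\ell \ge 1$ tends to $A_{H_{w;i_1,\ldots,i_{\ell}}}$ by Lemma~\ref{lem:limit-subgraph-Gw}. This gives exactly the asserted formula for $p_{\widehat W}(w)$, and in passing shows the (C1)-constrained circuits are negligible throughout (for Catalan $w$ they already number $o(n^{1+k})$, hence so does any subcollection of them), as anticipated in Remark~\ref{remark:wignerequiv}. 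Thus (M1) holds with $m_{2k} = m_{2k}^{(\mathrm{IVW})}$ and $m_{2k+1} = 0$.

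To finish I would check (U) and compact support: since $0 \le \#(B_{i_1}(w)\cap\cdots\cap B_{i_\ell}(w)) \le \#\Pi_W^{\prime}(w)$, the count of ``good'' circuits lies between $0$ and $\#\Pi_W^{\prime}(w)$, so $0 \le p_{\widehat W}(w) \le 1$ and $0 \le m_{2k}^{(\mathrm{IVW})} \le \#\C_{2k} = C_k \le 4^k$. Carleman's condition then holds and identifies a unique law $\mu_{\widehat W}$, necessarily supported in $[-2,2]$; hence the EESD converges weakly to $\mu_{\widehat W}$, which is symmetric about $0$. Finally (M4) for $n^{-1/2}\widehat W_n$ follows from the counting argument recorded after Fact~\ref{fact:m4}, so the ESD converges to $\mu_{\widehat W}$ weakly almost surely.

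The main obstacle is not in this theorem per se but in Lemma~\ref{lem:limit-subgraph-Gw}, which does the real work: one must see that imposing the gcd constraints \eqref{eq:gcd-cons} on a chosen subset of edges of $G(w)$ leaves a circuit count asymptotic to $n^{1+k}A_{H_{w;i_1,\ldots,i_\ell}}$, i.e.\ that the underlying co-primality density over the free (generating) vertices factorises over primes into the product-of-$Q_G$ form of \eqref{eq:hu2014}. Granting that, the only subtleties remaining here are bookkeeping: keeping the inclusion--exclusion indexed by the $2k$ cycle edge-positions $0,\ldots,2k-1$ (several of which collapse to the same edge of $G(w)$, so the formula carries ``redundant'' terms $A_{H_{w;i}} = A_{H_{w;j}}$ that nonetheless combine to count good circuits correctly --- as one can verify already for $w = aa$, where the formula returns $1 - 6/\pi^2$), and carrying along the negligibility of the (C1) circuits.
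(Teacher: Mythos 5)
Your proposal is correct and follows essentially the same route as the paper: reduce to pair-matched Catalan words via Facts~\ref{fact:pm_only}--\ref{fact:m4} and Remark~\ref{remark:wignerequiv}, apply inclusion--exclusion over the $2k$ edge positions (you expand the product of indicators, the paper phrases it as $\#(\Pi_W'(w)\cap I_{2k})$, but these are the same computation), and evaluate each term using Lemma~\ref{lem:limit-subgraph-Gw} together with $p_W(w)=1$. Your explicit Carleman/compact-support step and the remark on redundant $A_{H_{w;\cdot}}$ terms collapsing onto the same edge of $G(w)$ merely flesh out what the paper leaves implicit.
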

\begin{remark}
    (a)  Note that the limits in the two results above do not depend on the underlying probability law of the variables, i.e. the limits are  \textit{universal}.\vskip3pt

    \noindent (b) Suppose the entries are i.i.d. with mean $0$ and variance $1$. Then Assumption~\ref{assmp:entries} does not hold. However, once we prove Theorems \ref{theorem:visiblewigner} and \ref{theorem:invisiblewigner},  by using standard truncation arguments, we can show that in this case also the ESDs of $\{n^{-1/2}\widetilde W_n\}$ and $\{n^{-1/2}\widehat W_n\}$ converge to $\mu_{\widetilde{W}}$ and $\mu_{\widehat{W}}$ weakly almost surely, even if moments higher than 2 are not finite. We omit the details of the proof and refer the reader to \cite{bose2018} for such general truncation arguments.
\end{remark}
\begin{proof}[Proof of Theorem \ref{theorem:visiblewigner}]
    Recall Remark \ref{remark:wignerequiv}. We thus need to consider only even moments. From Facts~\ref{fact:pm_only}, \ref{fact:pw} and \ref{fact:m4}, no words outside $\mathcal{C}_{2k}$ contributes to the limit. Since there are a lot more $0$ entries due to the gcd constraints, unlike the standard Wigner matrix where each Catalan word contributes $1$ (see (\ref{eq:pistarwigner})), now they all contribute strictly less than $1$. Thus condition (U) follows immediately. All we are then left to show is that $p_{\widetilde{W}}(w)$ exists for any $w \in \mathcal{C}(2k)$.

Note that
\[
    \Pi_{\widetilde{W}}^{\prime}(w) = \cap_{i = 0}^{2k - 1} B_i(w).
\]
It follows from Lemma~\ref{lem:limit-subgraph-Gw} and Remark~\ref{rem:Gw} that
\begin{align*}
    p_{\widetilde{W}}(w) &= \lim_{n \to \infty} \frac{1}{n^{1 + k}} \#\Pi_{\widetilde{W}}^{\prime}(w) \\
    &= \lim_{n \to \infty} \frac{\# B_0(w)\cap \cdots \cap B_{2k - 1}(w)}{n^{1 + k}} \\
    &= A_{H_{w;0, \ldots, 2k - 1}} = A_{G(w)}.
\end{align*}
As a consequence,
\[
    \E \,\tr (n^{-1/2}\widetilde{W}_{n})^{2k} \rightarrow \sum_{w \in \C_{2k}}A_{G(w)}.
\]
That $\mu_{\widetilde{W}}$ has compact support follows from the observation that its moments are dominated by the moments of the semi-circle law, which has compact support. This completes the proof.
\end{proof}
\begin{proof}[Proof of Theorem \ref{theorem:invisiblewigner}]
From our discussions earlier, it is enough to show that the limit of the even moments of the EESD exist. Define
\[
    I_{2k} := \{\pi \in [n]^{[2k]} \mid  \gcd(\pi(i), \pi(i + 1)) \ne 1 \text{ for } i = 0, \ldots, 2k - 1\}.
\]
Then $\Pi_{\hat{W}}'(w) = \Pi_{W}'(w) \cap I_{2k}$
We thus have the representation
\[
    m_{2k}^{\mathrm{(IVW)}} = \sum_{w \in \C_{2k}} \lim_{n \to \infty}\frac{1}{n^{1 + k}} \# \Pi_W^{\prime}(w) \cap I_{2k},
\]
assuming the limits inside the sum exist.

By the inclusion-exclusion principle,
\[
    \#\Pi_W^{\prime}(w) \cap I_{2k} = \#\Pi_W^{\prime}(w) + \sum_{\ell = 1}^{2k} (-1)^{\ell} \sum_{0 \le i_1 < \ldots < i_{\ell} \le 2k - 1} \#B_{i_1}(w) \cap \cdots B_{i_{\ell}}(w).
\]
The result now follows from Lemma~\ref{lem:limit-subgraph-Gw}. The claim of compact support follows as before.
\end{proof}
\begin{example}
For $k = 1$, we have $\C_{2k} = \{aa\}$, and
\begin{align*}
    m_{2}^{(\mathrm{IVW})} &= 1 - H_{aa; 0} - H_{aa; 1} + H_{aa; 0, 1} \\
    &= 1 - A_{G(aa)} = 1 - \frac{6}{\pi^2},
\end{align*}
where we have used the fact that
\[
    H_{aa; 0} = H_{aa; 1} = H_{aa; 0, 1} = G(aa)\,.
\]
\end{example}
\begin{remark}\label{rem:joint-conv}
It is well known that independent standard Wigner matrices are asymptotically free in the sense of joint tracial convergence. Joint tracial convergence of independent visible and invisible Wigner matrices can be shown by extending the above arguments. By computing the first few joint moments directly, it can be verified that these matrices are not asymptotically free. We omit the details. In particular, it may be verified that $n^{-1/2}\widehat{W}_n$ and $n^{-1/2}\widetilde{W}_n$ converge jointly in the tracial sense to $(a, b)$, say, where $a$ and $b$ are not free, and $a+b$ is a semi-circle variable.
\end{remark}

In Figure \ref{fig:momcum} we have plotted the first five empirical moments and free cumulants of these three matrices. It is evident that the fourth free cumulant for the Wigner matrix (which is exactly zero) does not equal the sum of the fourth free cumulants for the visible and invisible Wigner matrices (both of which seem to be positive). This empirically demonstrates the non-freeness result stated in Remark~\ref{rem:joint-conv}.
\begin{figure}[ht]
    \centering
    \includegraphics[scale = 0.27]{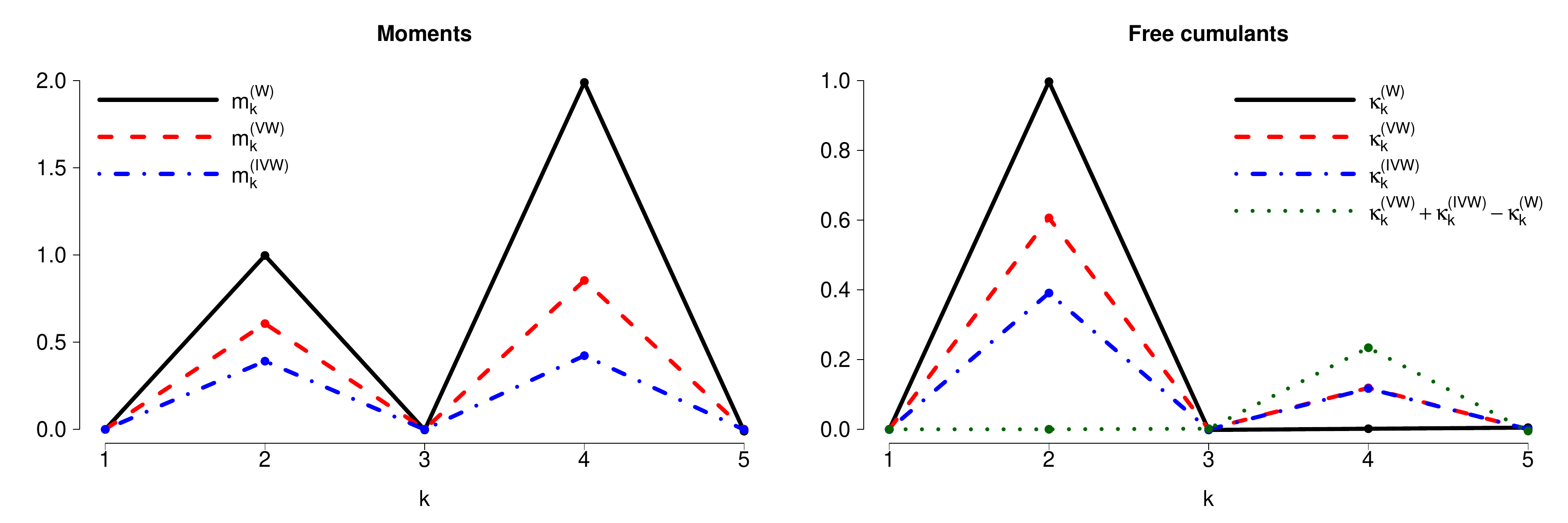}
    \caption{First five empirical moments and \textit{free cumulants} of the ESDs of $n^{-1/2}W_n$, $n^{-1/2}\tilde{W}_n$ and $n^{-1/2}\hat{W}_n$ with standard Gaussian entries, $n = 1000$.}
    \label{fig:momcum}
\end{figure}
\section{Discussion}
It appears difficult to obtain nice formulas for the moments of the two LSDs, let alone obtain the value of $p(w)$ for each $w$. We can compare this situation with the triangular Wigner matrix. Its LSD was shown to exist in \cite{basu2012}. There too there was no success in obtaining the value of $p(w)$ for each $w$. However, by using Abel polynomials, the authors were able to obtain some formula for the moments $\sum_{w} p(w)$, and the LSD was identified.

In the present case, there are many $w \ne w^\prime$ for which the trees $G(w)$ and $G(w^\prime)$ are isomorphic. Identifying the isomorphism classes could result in more compact expressions for the moments of the LSD. Given any Catalan word $w$, we have a double letter in it. Suppose we delete the double letter to get a reduced Catalan word $w^{\prime}$. Any nice relationship between $A_{G(w)}$ and $A_{G(w^{\prime})}$ might help in solving the above question. We currently have only some partial results.

It would be interesting to identify the two LSDs in some manner, or at least derive some qualitative properties. Simulations suggest that they are absolutely continuous with bounded densities (see~Figure~\ref{fig:esd}). Simulations also suggest that (see~Figure~\ref{fig:momcum}), their moments satisfy $m_{2k}^{(\mathrm{IVW})} \le m_{2k}^{(\mathrm{VW})}$, for all $k \ge 1$.

Let $\lambda_{\max}(A)$ denote the maximum eigenvalue of a matrix $A$. It is known that as $n \to \infty$, $\lambda_{\max}(n^{-1/2}W_n)$ converges almost surely to the upper edge of the LSD of $n^{-1/2}W_n$, namely $2$. In a recent work \cite{CHELIOTIS201836} shows that for the asymmetric triangular Wigner matrix $n^{-1/2}W_{n,t}$ say, under certain assumptions, the maximum singular value converges to $e$ almost surely, which is also the upper edge of the LSD of $n^{-1}W_{n,t}W_{n,t}^{\prime}$. From Table \ref{table:maxeval} it appears that the maximum eigenvalues of $n^{-1/2}\widetilde{W}_n$ and $n^{-1/2}\widehat{W}_n$ converge to $\widetilde w \approx 1.7$ and $\widehat{w} \approx 1.5$, respectively. To get a more precise estimate of the actual values of these numbers, more extensive simulations are required. Obtaining the exact value of these limits (which may be irrational numbers) appears to be a difficult problem. We also conjecture that these values are the same as the upper edges of the corresponding LSDs.

\begin{table}[ht]
\centering
\caption{Largest eigenvalues of standard, visible, and invisible Wigner matrices with standard Gaussian entries; $1000$ Monte Carlo simulations with $n = 1000$.}
\begin{tabular}{cccc}
    \toprule
    & $\lambda_{\max}(n^{-1/2}W_n)$ & $\lambda_{\max}(n^{-1/2}\tilde{W}_n)$ & $\lambda_{\max}(n^{-1/2}\hat{W}_n)$ \\
    \midrule
    mean & 1.99 & 1.69 & 1.50 \\
    sd   & 0.01 & 0.01 & 0.01 \\
   \bottomrule
\end{tabular}\label{table:maxeval}
\end{table}

It is known that $n^{2/3}(\lambda_{\max}(n^{-1/2}W_n)-2)$ converges weakly to the GOE Tracy-Widom law $\mathrm{TW}_1$ under appropriate conditions on the entries. Based on the simulations reported in Figure \ref{fig:max_ev}, we conjecture that both $n^{2/3}(\lambda_{\max}(n^{-1/2}\widetilde{W}_n)-\widetilde{w})$ and $n^{2/3}(\lambda_{\max}(n^{-1/2}\widehat{W}_n)-\widehat{w})$ converge in distribution to the $\mathrm{TW}_1$.
\begin{figure}[ht]
    \centering
    \includegraphics[scale = 0.4]{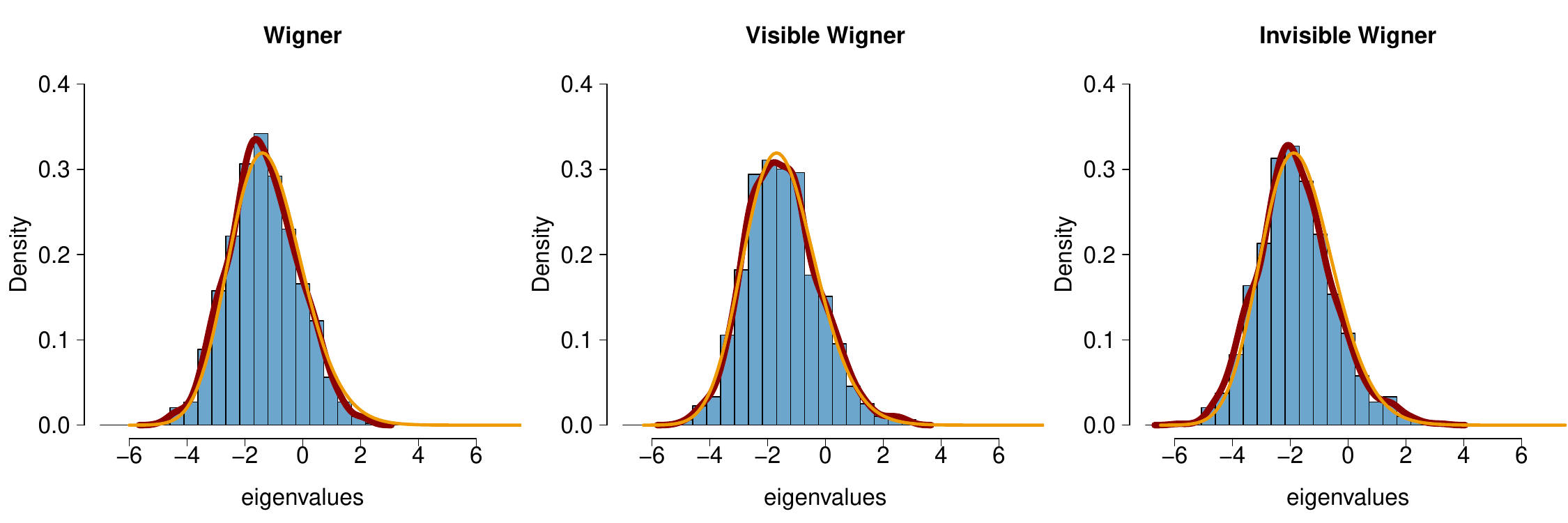}
    \caption{Histogram and kernel density estimates (in red) based on $1000$ Monte Carlo simulations with standard Gaussian entries and $n = 1000$,
    of the ESDs of $n^{2/3}(\lambda_{\max}(n^{-1/2}W_n) - 2)$, $n^{2/3}(\lambda_{\max}(n^{-1/2}\tilde{W}_n) - 1.705)$ and $n^{2/3}(\lambda_{\max}(n^{-1/2}\hat{W}_n) - 1.515)$. The orange curves depict the density of the Tracy-Widom $\mathrm{TW}_1$ distribution.}
    \label{fig:max_ev}
\end{figure}

\section*{Acknowledgements}    
AB was partially supported by his J.C. Bose Fellowship from SERB, Govt.~of India. SSM was partially supported by an INSPIRE research grant (DST/INSPIRE/04/2018/002193) from the Dept.~of Science and Technology, Govt.~of India and a Start-Up Grant from Indian Statistical Institute, Kolkata. 

\bibliographystyle{plain}
\bibliography{bib}

\begin{thebibliography}{10}

\bibitem{andguizei}
G.~Anderson, A.~Guionnet, and O.~Zeitouni.
\newblock {\em An Introduction to Random Matrices}.
\newblock Cambridge University Press, Cambridge, 2010.

\bibitem{basu2012}
R.~Basu, A.~Bose, S.~Ganguly, and R.~S. Hazra.
\newblock Spectral properties of random triangular matrices.
\newblock {\em Random Matrices, Theory and Applications}, 1(3):1250003, 2012.

\bibitem{bose2018}
A.~Bose.
\newblock {\em Patterned Random Matrices}.
\newblock CRC Press, Boca Raton, 2018.

\bibitem{CHELIOTIS201836}
D.~Cheliotis.
\newblock Triangular random matrices and biorthogonal ensembles.
\newblock {\em Statistics \& Probability Letters}, 134:36--44, 2018.

\bibitem{de2015counting}
J.~A. de~Reyna and R.~Heyman.
\newblock Counting tuples restricted by pairwise coprimality conditions.
\newblock {\em J. Integer Seq.}, 18(10):15--10, 2015.

\bibitem{hu2014pairwise}
J.~Hu.
\newblock Pairwise relative primality of positive integers.
\newblock {\em arXiv preprint arXiv:1406.3113}, 2014.

\bibitem{montgomery2007}
H.~L. Montgomery and R.~C. Vaughan.
\newblock {\em Multiplicative Number Theory. I. Classical Theory}.
\newblock Cambridge University Press, Cambridge, 2007.

\bibitem{nymann1972probability}
J.~E. Nymann.
\newblock On the probability that $k$ positive integers are relatively prime.
\newblock {\em Journal of Number Theory}, 4(5):469--473, 1972.

\bibitem{oeis}
{T}he {OEIS}~{F}oundation {I}nc.
\newblock Number of trees with $n$ unlabeled nodes.
\newblock \url{http://oeis.org/A000055}.
\newblock Accessed: December 10, 2023.

\bibitem{laszlo2002probability}
L.~T\'oth.
\newblock The probability that $k$ positive integers are pairwise relatively prime.
\newblock {\em Fibonacci Quarterly}, 40:13--18, 2002.

\end{thebibliography}

\end{document}